\documentclass[11pt,a4paper]{article}

\usepackage[T1]{fontenc}
\usepackage[utf8]{inputenc}
\usepackage[english]{babel}
\usepackage{amsmath,amssymb,amsthm}
\usepackage{mathtools}
\usepackage[margin=3cm]{geometry}
\usepackage{hyperref}
\hypersetup{colorlinks=true,linkcolor=blue,citecolor=blue,urlcolor=blue}

\theoremstyle{plain}
\newtheorem{theorem}{Theorem}
\newtheorem{lemma}[theorem]{Lemma}
\newtheorem{proposition}[theorem]{Proposition}
\newtheorem{corollary}[theorem]{Corollary}
\newtheorem{conjecture}[theorem]{Conjecture}

\theoremstyle{definition}
\newtheorem{example}[theorem]{Example}
\newtheorem{remark}[theorem]{Remark}
\newtheorem{question}[theorem]{Question}

\newcommand{\bfc}{\mathrm{BFC}}
\newcommand{\Aut}{\operatorname{Aut}}
\newcommand{\Z}{Z}
\newcommand{\dd}{\operatorname{d}}
\newcommand{\AGL}{\operatorname{AGL}}
\newcommand{\Ind}{\operatorname{Ind}}

\numberwithin{equation}{section}

\title{On the order of a finite group with trivial centre\\ and bounded conjugacy classes}
\author{Ilya Gorshkov}
\date{ }
\begin{document}
\maketitle

\begin{abstract}
A group is called an $n$-$\bfc$-group if each of its conjugacy classes is finite
and contains at most $n$ elements. By a theorem of B.~H.~Neumann the derived
subgroup of such a group is finite of $n$-bounded order, whereas the order of
the group itself need not be bounded, as extraspecial $p$-groups show. We prove
that an $n$-$\bfc$-group with trivial centre is finite of order at most
$n^{64(\log n)^{5}}$. Passing to the quotient by the hypercentre, we deduce that no
assumption on the structure of the group is needed at all: for an arbitrary
finite $n$-$\bfc$-group $G$ one has $|G:Z_\infty(G)|<n^{64(\log n)^{5}}$. The finiteness of the group is required here only for the correct definition of the hypercentre.

\end{abstract}

\section{Introduction}

In 1954 B.~H.~Neumann~\cite{Neumann54} introduced the class of $\bfc$-groups.
A group $G$ is called an \emph{$n$-$\bfc$-group} if each of its conjugacy
classes is finite and contains at most $n$ elements. The least such $n$ is
called the \emph{$\bfc$-number} of $G$; we denote it by $\bfc(G)$. A group is
called a $\bfc$-group if it is an $n$-$\bfc$-group for some $n$. Neumann proved
that a group is a $\bfc$-group if and only if its derived subgroup $G'$ is
finite. In particular, for an $n$-$\bfc$-group the order $|G'|$ is bounded by a
function of $n$.

Subsequent work refined the upper bound for the order of the derived subgroup of
an $n$-$\bfc$-group. The first explicit bound was obtained by
Wiegold~\cite{Wiegold57}; it was substantially improved by
Macdonald~\cite{Macdonald61} and by P.~M.~Neumann and
M.~R.~Vaughan-Lee~\cite{NVL77}.

Wiegold conjectured that in fact $|G'|\le n^{\frac12(1+\log n)}$. For nilpotent
groups this bound was proved by Vaughan-Lee~\cite{VL74}, the extremal examples
being the free groups of exponent $p$. The best bound known at present in the
general case is due to R.~Guralnick and A.~Mar\'oti~\cite{GM11}. Their method
rests on estimates for the average dimension of fixed point spaces. Note that the result of Guralnick and Mar\'oti depends on the
classification of finite simple groups (CFSG).

We shall be interested in the following question: under what conditions does the
$\bfc$-number bound the order not only of the derived subgroup, but of the whole
group?

In this direction results are known under the assumption that the group has no
nontrivial abelian normal subgroup, that is, $F(G)=1$, where $F(G)$ is the
Fitting subgroup of $G$. D. Segal and A. Shalev \cite{SS99} proved that in this case
$|G|<n^{4}$. Guralnick and Mar\'oti~\cite[Theorem 1.10]{GM11} improved this
bound to $|G|<n^{2}$, establishing the more general inequality, under the
assumption that the Fitting subgroup is finite,
$$
  |G|<n^{2}k\bigl(F(G)\bigr),
$$
where $k(X)$ denotes the number of conjugacy classes of a group $X$.

The condition $F(G)=1$ is, however, substantially stronger than triviality of
the centre: the groups $\AGL(1,q)$ of Proposition~\ref{prop:agl} have trivial
centre, whereas their Fitting subgroup is the regular normal subgroup of order
$q$, so that the bound above only gives $|G|<n^{3}$ for them. The main result of
the present paper covers such groups as well.

\begin{theorem}\label{thm:main}
Let $G$ be an $n$-$\bfc$-group with trivial centre. Then
$$
  |G|\leq n^{64(\log n)^5}.
$$
In particular, $|G|$ is bounded by a function of $n$.
\end{theorem}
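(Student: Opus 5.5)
Let $G$ be an $n$-$\bfc$-group with $Z(G)=1$. The plan is to bound $|G:C_G(G')|$ and $|C_G(G')|$ separately, using that $|G'|$ is bounded by Neumann's theorem.

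\textbf{Step 1: the index $|G:C_G(G')|$.} By Neumann's theorem, in the explicit form of Guralnick--Mar\'oti, $|G'|\le n^{c\log n}$ for some constant $c$. Since $G/C_G(G')$ embeds in $\Aut(G')$, we get $|G:C_G(G')|\le |G'|^{\log|G'|}$. Here I use that a group of order $m$ has at most $\log m$ generators, so that $|\Aut(X)|\le |X|^{\log|X|}$. This already gives a bound of the form $n^{O((\log n)^2)}$.

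\textbf{Step 2: the structure of $C=C_G(G')$.} This subgroup is normal in $G$ and satisfies $[C,G]\le G'$. Its derived subgroup $C'$ lies in $G'\cap C=Z(G')\cap C$. Moreover $[C,G,C]\le [G',C]=1$, and $[G,C,C]=1$ as well, so the three subgroup lemma gives $[C',G]=1$. Hence $C'\le Z(G)=1$ and $C$ is abelian. The remaining task is to bound the abelian normal subgroup $C$.

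\textbf{Step 3: bounding $C$.} For $c\in C$, the map $g\mapsto[c,g]$ is a homomorphism $G\to G'$. Indeed, $[c,gh]=[c,h][c,g]^h$, and $[c,g]\in G'$ commutes with $h$ only when $h\in C$, so this is a homomorphism on $C$ itself and a crossed homomorphism on $G$. I would restrict to a transversal-free formulation. Since $Z(G)=1$, $c$ is determined by the map $g\mapsto [c,g]$ as $g$ ranges over a generating set of $G$ modulo $C$, together with elements of $C$; but $C$ is abelian, so only generators outside $C$ matter. Choose $d\le \log|G:C|$ elements $g_1,\dots,g_d$ that generate $G$ modulo $C$. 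The map
$$c\mapsto([c,g_1],\dots,[c,g_d])\in (G'\cap C)^d$$
is injective, because its kernel centralizes $C$ and all $g_i$, hence lies in $Z(G)=1$. It is also a homomorphism, since $[cc',g]=[c,g]^{c'}[c',g]$ and $C$ is abelian with $[c,g]\in C$ (as $C\trianglelefteq G$). Therefore
$$|C|\le |G'|^{\log|G:C|}.$$

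\textbf{Step 4: combining.} Set $L=\log|G'|\le c(\log n)^2$. Step 1 gives $\log|G:C|\le L^2$, and Step 3 gives $\log|C|\le L\cdot L^2=L^3$. Hence
$$\log|G|\le L^2+L^3=O((\log n)^6),$$
which has the right shape. The paper's exponent $64(\log n)^5$ suggests a sharper treatment: either better bounds on the generator numbers, or the observation that $G/C$ acts faithfully and is generated by at most $\log|G'|$ elements modulo $C$ via conjugacy-class counting. Alternatively, one can bound $|G:C|$ by $n^{d(G')}$, since $|G:C_G(x)|\le n$ for each of the generators $x$ of $G'$. That gives $\log|G:C|\le L\log n$, so $\log|C|\le L^2\log n$ and $\log|G|=O((\log n)^5)$, matching the statement.

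\textbf{Main obstacle.} The delicate point is getting the constant $64$, which depends on the precise explicit version of the Neumann bound for $|G'|$ that is used.
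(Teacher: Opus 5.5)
Your proof is correct, and it is genuinely simpler than the paper's. Both arguments start from $C=C_G(G')$ and bound $|G:C|$ using the finiteness of $G'$. The paper, however, only records $C'\le Z(C)$, i.e.\ class at most $2$. It therefore has to do more work:
\begin{enumerate}
\item pass to $K=\bigcap_i C_C(g_i)$ and prove $K$ abelian;
\item choose further elements $c_1,\dots,c_s$ with $C=\langle K,c_1,\dots,c_s\rangle$ and pass to $D=\bigcap_j C_K(c_j)\le Z(C)$;
\item bound $|Z(C)|\le n^r$ separately, and multiply the four indices $|G:C|\,|C:K|\,|K:D|\,|D|$.
\end{enumerate}
Your three subgroup lemma computation gives the stronger fact $[C',G]=1$. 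Hence $C'\le Z(G)=1$ and $C$ is abelian. The paper's Step~3, applied to $C=Z(C)$, then finishes the proof by itself, and $K$, $D$ and the second round of relative generators disappear. Your alternative bound $|G:C|\le n^{\dd(G')}$ (Poincar\'e over a generating set of $G'$) is also sharper than going through $|\Aut(G')|$. One thing to change is the citation. The paper deliberately uses the Neumann--Vaughan-Lee bound (Lemma~\ref{ENVL}) rather than Guralnick--Mar\'oti in order to avoid CFSG, and your argument works equally well with it.

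Two remarks on your Step~4. First, you have misread the target. The bound $|G|\le n^{64(\log n)^5}$ means $\log|G|\le 64(\log n)^6$. So your first estimate $\log|G|\le L^2+L^3$ already has the theorem's shape, and your refined $O((\log n)^5)$ is strictly stronger, not merely matching.

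Second, the constant is no obstacle. For $n=1$ the group is abelian, hence trivial. For $n\ge 2$ put $\lambda=\log n\ge 1$; either Lemma~\ref{ENVL} or Lemma~\ref{NM} gives $L\le 4\lambda^2$.
\begin{itemize}
\item Your unrefined version yields $16\lambda^4+64\lambda^6$, which narrowly overshoots $64\lambda^6$.
\item Your refined version yields $\log|G|\le L\lambda+L^2\lambda\le 20\lambda^5\le 64\lambda^6$, which suffices.
\item Better still, do not embed $C$ into $(G'\cap C)^d$. Observe instead that $\bigcap_i C_C(g_i)=1$ and $|C:C_C(g_i)|\le n$, so $|C|\le n^{d}$ by Poincar\'e. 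This gives $\log|G|\le L\lambda(1+\lambda)\le 8\lambda^4$, i.e.\ $|G|\le n^{8(\log n)^3}$.
\end{itemize}
Finally, the crossed-homomorphism digression at the start of your Step~3 is not needed and can be deleted; it also contains the inaccurate claim ``only when $h\in C$''.
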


To bound the order of $G$ we used the estimate of P.~M.~Neumann and
M.~R.~Vaughan-Lee~\cite{NVL77}, which does not use CFSG. Thus our proof does not
depend on the CFSG and is readily amenable to verification in the Lean
proof assistant. Replacing the Neumann--Vaughan-Lee bound in the theorem by the
bound obtained in~\cite{GM11} improves the bound on $|G|$ substantially;
it nevertheless remains exponential.

We did not attempt to obtain the best possible bound for the order of the group,
since our main aim was the following corollary.

\begin{corollary}\label{cor:finite}
Let $G$ be an $n$-$\bfc$-group with trivial centre. Then $G$ is a finite group
and the order of $G$ is bounded in terms of $n$.
\end{corollary}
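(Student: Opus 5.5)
The plan is to read Corollary~\ref{cor:finite} off Theorem~\ref{thm:main} directly. Theorem~\ref{thm:main} is stated for an arbitrary $n$-$\bfc$-group with trivial centre, with no finiteness hypothesis. Its conclusion $|G|\le n^{64(\log n)^5}$ is a bound on the cardinality of $G$. A group whose cardinality is bounded by a natural number is finite, and the same inequality bounds $|G|$ in terms of $n$ alone. So both assertions of the corollary follow at once, and essentially nothing new has to be proved.

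The one point I would check carefully is that the proof of Theorem~\ref{thm:main} never tacitly assumes $G$ is finite, for instance through induction on $|G|$ or through counting arguments that need $|G|<\infty$. If it does, I would add a reduction from the infinite case to the finite one, in three steps.
\begin{enumerate}
\item By B.~H.~Neumann's theorem, $G'$ is finite.
\item Each centralizer $C_G(g)$ has index at most $n$, and $Z(G)=\bigcap_{g\in G}C_G(g)=1$. I would try to find a finite set $X\subseteq G$ with $C_G(X)$ small, using that the chain of centralizers $C_G(x_1)\ge C_G(x_1,x_2)\ge\cdots$ must stabilise in a controlled way.
\item I would then pass to a suitable finite section and apply the finite case of Theorem~\ref{thm:main} there. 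Here a finitely generated subgroup $H=\langle X\rangle G'$ is a natural choice, since a finitely generated $\bfc$-group has centre of finite index.
\end{enumerate}

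The main obstacle is this reduction, should it be needed. Quotienting a subgroup by its centre need not leave a group with trivial centre, so a finite image of $G$ may have nontrivial centre and the finite case cannot be applied naively. I expect the argument as stated in the paper to cover arbitrary groups, in which case the corollary is immediate.
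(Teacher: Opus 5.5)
Your proposal is correct and matches the paper, which gives no separate proof and simply reads the corollary off Theorem~\ref{thm:main}, a result stated and proved for arbitrary, not necessarily finite, $n$-$\bfc$-groups. The check you flag goes through: the proof uses only the finiteness of $G'$ (Lemma~\ref{ENVL}, valid for all $\bfc$-groups) and index arguments (Lemmas~\ref{lem:poincare}, \ref{lem:index}, \ref{lem:relgen}) that hold in any group, and $|D|\le|\Z(C)|\le n^{r}$ makes every factor of $|G|=|G:C|\,|C:K|\,|K:D|\,|D|$ finite, so your contingent reduction is never needed.
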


For a subgroup $H\leq G$ and an element $g\in G$ we put
$\Ind(H,g)=|H:C_H(g)|$; in particular, $\Ind(G,g)=|g^{G}|$.

Note that triviality of the centre of $G$ in the hypothesis of the theorem is
essential.

\begin{example}\label{ex:extraspecial-intro}
Let $G$ be an extraspecial group of order $p^{1+2k}$. Then $|x^{G}|\le p$ for
every $x\in G$. Hence $G$ is a $p$-$\bfc$-group, while its order $p^{1+2k}$ is
arbitrary. Moreover $|G:\Z(G)|=p^{2k}$. Thus the quantity $|G:\Z(G)|$ is not
bounded in terms of $n$ either.
\end{example}

Nevertheless, the condition on the centre can be dispensed with altogether if
the centre is replaced by the hypercentre. Recall that the \emph{hypercentre}
$Z_\infty(G)$ of a finite group $G$ is the last term of the upper central series
$1=Z_0(G)\le Z_1(G)=\Z(G)\le Z_2(G)\le\dots$, that is, the term at which the
series stabilises.

\begin{corollary}\label{cor:hyper}
Let $G$ be a finite $n$-$\bfc$-group with $n\ge2$. Then
$$
  |G:Z_\infty(G)|<n^{64(\log n)^5}.
$$
\end{corollary}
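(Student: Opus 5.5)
The plan is to pass to the quotient $\bar G=G/Z_\infty(G)$ and apply Theorem~\ref{thm:main} to it. Two facts are needed. First, $\bar G$ has trivial centre. Second, $\bar G$ is an $n$-$\bfc$-group.

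For the first fact I use finiteness of $G$. The upper central series stabilises at $Z_\infty(G)=Z_m(G)$, so $Z_{m+1}(G)=Z_m(G)$. By definition $Z(G/Z_m(G))=Z_{m+1}(G)/Z_m(G)$, hence $Z(\bar G)=1$.

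For the second fact, let $\bar x=xZ_\infty(G)$. The conjugacy class $\bar x^{\bar G}$ is the image of $x^G$ under the natural homomorphism, so $|\bar x^{\bar G}|\le|x^G|\le n$. Thus $\bar G$ is an $n$-$\bfc$-group, and Theorem~\ref{thm:main} gives
$$
|G:Z_\infty(G)|=|\bar G|\le n^{64(\log n)^5}.
$$

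It remains to make the inequality strict, and this is the only delicate point. Since $n\ge2$, we have $\log n>0$. I treat two cases according to the $\bfc$-number $m=\bfc(\bar G)\le n$.

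If $m\ge 2$, then $\bar G$ is an $m$-$\bfc$-group with trivial centre. If $m<n$, Theorem~\ref{thm:main} gives $|\bar G|\le m^{64(\log m)^5}<n^{64(\log n)^5}$, because the function $t\mapsto t^{64(\log t)^5}$ is strictly increasing for $t>1$. If $m=n$, I need strictness directly. One could try to extract it from the proof of Theorem~\ref{thm:main}, which presumably contains slack, since its bound is far from sharp. Without inspecting that proof, I would instead use Guralnick--Mar\'oti or the crude bound $|\bar G|\le n\,|\bar G'|\cdot(\ldots)$. The real point is that the right-hand side exceeds any natural bound for $|\bar G|$ strictly once $n\ge2$. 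For example, at $n=2$ the right-hand side is $2^{64(\log 2)^5}>1$, while a $2$-$\bfc$-group with trivial centre must be trivial: an element with a class of size $2$ has a centraliser of index $2$, and one can argue from there.

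If $m=1$, then $\bar G$ is abelian with trivial centre, so $\bar G=1<n^{64(\log n)^5}$.

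The main obstacle is this strict-versus-non-strict mismatch with the statement of Theorem~\ref{thm:main}, which asserts $\le$. The remainder is routine. My first attempt would be to note that the paper's proof of Theorem~\ref{thm:main} yields the bound with strict inequality: the Neumann--Vaughan-Lee estimate it invokes is itself strict. I would then simply cite that strict form.
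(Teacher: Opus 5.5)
Your reduction is exactly the paper's proof: pass to $\bar G=G/Z_\infty(G)$, get $\Z(\bar G)=1$ from stabilisation of the upper central series, note that classes of $\bar G$ are images of classes of $G$, dispose of the case $\bar G=1$, and use monotonicity of $t\mapsto t^{64(\log t)^5}$. Your case $m<n$ is fine. You are also right that Theorem~\ref{thm:main} only gives $\le$; the paper's own proof passes silently to $|\bar G|<\bar n^{64(\log\bar n)^5}$.

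The gap is that you never actually close the case $m=n$, and the route you would try first is not available. Lemma~\ref{ENVL} is stated with $\le$, so there is no strict Neumann--Vaughan-Lee bound to cite. Switching to the strict Lemma~\ref{NM} does work: with $a'=\tfrac12(7+\lambda)\le4\lambda$, the strict inequality $|G'|<n^{a'}$ propagates through $|G:C|\le|G'|^{\log|G'|}$. But it brings back CFSG, which the paper deliberately avoids. Your $n=2$ remark stops at ``one can argue from there''.

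A CFSG-free repair inside the proof of Theorem~\ref{thm:main} is available. Since $G'\ne1$, an automorphism of $G'$ sends each element of a minimal generating set to a non-identity element. Hence $|G:C|\le|\Aut(G')|\le(|G'|-1)^{\dd(G')}<|G'|^{\log|G'|}\le n^{a^2\log n}$. This strict factor propagates through $|G|=|G:C|\,|C:K|\,|K:D|\,|D|$, giving $|G|<n^{64(\log n)^5}$ for every centreless $n$-$\bfc$-group.

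Alternatively, for $n>2$ one has $\lambda>1$, so $a<4\lambda$ and $(1+\lambda)^2<4\lambda^2$ make the final exponent estimate strict. For $n=2$, a finite centreless group $G$ with all classes of size at most $2$ is trivial. Suppose $1\ne x$ and $x^G=\{x,y\}$. The conjugate subgroups $C_G(x)$ and $C_G(y)$ have index $2$, hence are normal, hence coincide. So $x$ and $y$ commute, and every element of $G$ fixes or swaps them, whence $xy\in\Z(G)=1$ and $y=x^{-1}\ne x$. Thus $G$ has no involutions yet has a subgroup of index $2$, contradicting Cauchy's theorem.
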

Note that Corollary \ref{cor:hyper} requires the group to be finite. This condition is used only to ensure that the subgroup $Z_{\infty}(G)$ is defined correctly.
Here no assumption whatsoever is made on the structure of $G$. For the
extraspecial group of Example~\ref{ex:extraspecial-intro},
Corollary~\ref{cor:hyper} holds trivially, since that group is nilpotent and
$Z_\infty(G)=G$; for groups with trivial centre it turns into
Theorem~\ref{thm:main}.

The following conjecture was stated in~\cite{NVL77}.

\begin{conjecture}\label{NVL}
Let $G$ be a perfect and centreless finite group, and let $n$ be the maximum
size of a conjugacy class in $G$. Is it true that $|G|\leq n^2$?
\end{conjecture}

This conjecture is recorded in the Kourovka Notebook~\cite{Kour} as Problem
20.30.

It is of particular interest to find the exact bound for the order of $G$. We
state the following conjecture.

\begin{conjecture}\label{conj:sharp}
If $G$ is a finite $n$-$\bfc$-group with trivial centre, then $|G|\le n^2-n$.
\end{conjecture}

The bound $n^2-n$ is attained on the groups
$\AGL(1,q)=\mathbb{F}_q\rtimes\mathbb{F}_q^{\times}$
(Proposition~\ref{prop:agl}). A direct search through all groups of order at
most $1000$ with trivial centre produces no counterexamples
(\S\ref{sec:sharp}). Moreover, the truth of Conjecture~\ref{conj:sharp} implies
the truth of Conjecture~\ref{NVL}.

\begin{corollary}\label{prim}
If Conjecture~\ref{conj:sharp} is true, then so is Conjecture~\ref{NVL}.
\end{corollary}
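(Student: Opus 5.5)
We must show: if every finite centreless group $G$ with maximal class size $n$ satisfies $|G|\le n^2-n$, then every perfect centreless finite group $G$ with maximal class size $n$ satisfies $|G|\le n^2$. This is immediate once we note that $G$ is then an $n$-$\bfc$-group with trivial centre. Since $n^2-n\le n^2$, the conclusion follows without ever using perfectness.

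\textbf{Plan.} Let $G$ be perfect and centreless with $n=\max_{g\in G}|g^G|$. The plan has three steps.

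\begin{itemize}
\item First, $G$ is finite, so every conjugacy class is finite. By the choice of $n$, each class has at most $n$ elements, so $G$ is an $n$-$\bfc$-group. In fact $n=\bfc(G)$.
\item Second, $Z(G)=1$ by hypothesis. Hence $G$ satisfies the assumptions of Conjecture~\ref{conj:sharp}, and that conjecture yields $|G|\le n^2-n$.
\item Third, since $n\ge 0$ we have $n^2-n\le n^2$, which gives $|G|\le n^2$, as required in Conjecture~\ref{NVL}.
\end{itemize}

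\textbf{Degenerate cases.} If $G=1$, then $n=1$ and $1\le 1$. Conjecture~\ref{conj:sharp} would give $|G|\le 0$, which fails for the trivial group. So the trivial group is implicitly excluded from Conjecture~\ref{conj:sharp}, or it is handled separately, and in either case Conjecture~\ref{NVL} holds for it trivially. For nontrivial centreless $G$ we have $n\ge 2$, so no issue arises.

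\textbf{Main obstacle.} There is essentially none. The only point needing care is the degenerate case above, and checking that the hypotheses match: the maximal class size in Conjecture~\ref{NVL} is exactly the $\bfc$-number. The perfectness hypothesis is simply not needed. One may remark that for perfect groups the stronger bound $n^2-n$ then holds.
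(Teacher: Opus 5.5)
Your proof is correct for Conjecture~\ref{NVL} as this paper states it (perfect and centreless, bound on $|G|$), but it is shorter than the paper's route.

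The paper does not assume $Z(G)=1$. It starts from an arbitrary perfect $n$-$\bfc$-group $G$ and passes to $\bar G=G/Z(G)$. Since classes of $\bar G$ are images of classes of $G$, $\bar G$ is an $\bar n$-$\bfc$-group with $\bar n\le n$. Gr\"un's lemma (Lemma~\ref{lem:grun}) then gives $Z(\bar G)=1$. Applying Conjecture~\ref{conj:sharp} to $\bar G$ yields $|G:Z(G)|\le \bar n^2-\bar n\le n^2-n<n^2$.

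That is the only place perfectness enters, and it gives a stronger consequence than the literal statement requires. Conjecture~\ref{conj:sharp} then bounds $|G:Z(G)|$ for every finite perfect group, not only centreless ones. When $Z(G)=1$, $\bar G=G$ and the paper's argument reduces to yours. So your remark that perfectness is superfluous for the stated version is accurate, while the paper's detour through $G/Z(G)$ is what makes the hypothesis matter.

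Your explicit treatment of the trivial group is a point the paper passes over silently. For $G=1$ with $n=1$, Conjecture~\ref{conj:sharp} would read $|G|\le 0$, so that case must be excluded and handled separately, as you do.
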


The paper is organised as follows. In \S\ref{sec:prelim} we collect the
auxiliary facts used in the sequel. In \S\ref{sec:main} we prove
Theorem \ref{thm:main} and Corollary \ref{cor:hyper}. In \S\ref{sec:sharp} we
discuss the sharpness of the bound obtained: a series of groups attaining the
value $n^2-n$ is exhibited, and Corollary \ref{prim} is proved.
In \S\ref{sec:open} open questions are stated.

\section{Preliminaries}\label{sec:prelim}

The following lemma is known as Poincare's theorem. It is given in many
textbooks, see for instance \cite[\S4]{KM}.

\begin{lemma}\label{lem:poincare}
Let $A$ and $B$ be subgroups of a group $G$ with $|G:A|=k$ and $|G:B|=m$. Then
$|G:A\cap B|\leq km$.
\end{lemma}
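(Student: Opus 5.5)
The plan is to compare cosets of $A\cap B$ with pairs of cosets of $A$ and of $B$. Let $\mathcal{L}_X$ denote the set of left cosets of a subgroup $X$ in $G$. I will consider the map
$$
\varphi\colon \mathcal{L}_{A\cap B}\to \mathcal{L}_A\times\mathcal{L}_B,\qquad g(A\cap B)\mapsto (gA,\,gB).
$$
The codomain has cardinality $|G:A|\cdot|G:B|=km$. It therefore suffices to show that $\varphi$ is well defined and injective, because then $|G:A\cap B|=|\mathcal{L}_{A\cap B}|\le km$. The statement allows $G$ to be infinite, so I will argue with injections between sets and never appeal to Lagrange's theorem in the form $|G|=|H|\,|G:H|$.

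For well-definedness, suppose $g(A\cap B)=h(A\cap B)$. Then $h^{-1}g\in A\cap B$. Hence $h^{-1}g\in A$, so $gA=hA$, and $h^{-1}g\in B$, so $gB=hB$.

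For injectivity, suppose $gA=hA$ and $gB=hB$. Then $h^{-1}g$ lies in both $A$ and $B$, hence in $A\cap B$, and so $g(A\cap B)=h(A\cap B)$. Both steps are immediate.

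No step is a genuine obstacle. The one point that needs care is that the index bound must be phrased for arbitrary groups, which the injection handles directly. As a cross-check, one can also argue with the coset $gA\cap gB=g(A\cap B)$: each coset of $A\cap B$ is the intersection of a coset of $A$ with a coset of $B$, which gives the same count.
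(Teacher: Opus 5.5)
Your proof is correct: the map $g(A\cap B)\mapsto(gA,gB)$ is well defined and injective, and this is the standard argument for Poincar\'e's theorem, which the paper does not prove but simply quotes as a textbook result. Your care in avoiding Lagrange's theorem is appropriate, because in the proof of Theorem~\ref{thm:main} the lemma is applied, iteratively, to groups such as $C$, $\Z(C)$ and $K$ before $G$ is known to be finite.
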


\begin{lemma}\label{lem:index}
Let $G$ be an $n$-$\bfc$-group, $g\in G$ and $H\leq G$. Then $\Ind(H,g)\leq n$.
\end{lemma}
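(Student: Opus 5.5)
The plan is to compare the $H$-orbit of $g$ with its full $G$-conjugacy class via the orbit–stabiliser theorem. By definition $\Ind(H,g)=|H:C_H(g)|$, and $C_H(g)=H\cap C_G(g)$. So the first step is to identify this index with the size of the $H$-conjugacy class $g^{H}=\{h^{-1}gh : h\in H\}$. This follows from the bijection $C_H(g)h\mapsto h^{-1}gh$ between right cosets of $C_H(g)$ in $H$ and $g^{H}$. It is well defined and injective because
$$h_1^{-1}gh_1=h_2^{-1}gh_2 \iff h_1h_2^{-1}\in C_H(g),$$
and it is surjective by construction. Note that the argument does not need $H$ to have finite index or to be finite; it only compares cardinalities.

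The second step is the inclusion $g^{H}\subseteq g^{G}$, which is immediate since $H\le G$. Because $G$ is an $n$-$\bfc$-group, $|g^{G}|\le n$. Hence
$$\Ind(H,g)=|g^{H}|\le|g^{G}|\le n.$$
In particular $|H:C_H(g)|$ is finite.

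An equivalent formulation, which I might use instead, goes through cosets. The map $hC_H(g)\mapsto hC_G(g)$ from left cosets of $C_H(g)$ in $H$ to left cosets of $C_G(g)$ in $G$ is injective, since $H\cap C_G(g)=C_H(g)$. This gives $|H:C_H(g)|\le|G:C_G(g)|=|g^{G}|\le n$.

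There is no real obstacle. The only point needing care is that $\Ind$ is defined via the index, which requires the index to be finite. This is settled by the injection into the finite set $g^{G}$ (or into the coset space $G/C_G(g)$), so no finiteness assumption on $H$ or $G$ is required.
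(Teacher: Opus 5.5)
Your proposal is correct and follows the paper's proof: the paper also argues $g^{H}\subseteq g^{G}$ and concludes $\Ind(H,g)\le\Ind(G,g)\le n$. You additionally spell out the orbit--stabiliser identification $|H:C_H(g)|=|g^{H}|$, and the finiteness of the index, which the paper leaves implicit.
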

\begin{proof}
Elements that are conjugate in $H$ are clearly conjugate in $G$ as well, so
$g^{H}\subseteq g^{G}$. Since $\Ind(G,g)\leq n$, the assertion of the lemma
follows.
\end{proof}

We denote by $\dd(G)$ the minimal number of generators of a group $G$.

\begin{lemma}\label{lem:aut}
For every finite group $G$ the following hold:
\begin{enumerate}
	\item $\dd(G)\le\log|G|$;
	\item $|\Aut(G)|\le|G|^{\log|G|}$.
\end{enumerate}
\end{lemma}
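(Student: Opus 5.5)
For part (1) the plan is to build a chain of subgroups by adding generators one at a time. Choose elements $g_1,g_2,\dots$ greedily so that each $g_{i+1}$ lies outside $H_i=\langle g_1,\dots,g_i\rangle$, and set $H_0=1$. Each inclusion $H_i<H_{i+1}$ is proper, so Lagrange's theorem gives $|H_{i+1}|\ge 2|H_i|$, hence $|H_i|\ge 2^i$. The process stops once $H_k=G$. At that point $2^k\le|G|$, so $\dd(G)\le k\le\log|G|$, where $\log$ means base $2$, as throughout the paper. This also covers $G=1$, where $k=0$.

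For part (2) I would use the fact that an automorphism is determined by its values on a generating set. Fix generators $g_1,\dots,g_d$ of $G$ with $d=\dd(G)$. The map
$$\Aut(G)\to G^{d},\qquad \varphi\mapsto(\varphi(g_1),\dots,\varphi(g_d))$$
is injective: two automorphisms agreeing on the $g_i$ agree on every word in them, and hence on all of $G$. It follows that $|\Aut(G)|\le|G|^{d}$. Part (1) then gives $|G|^{d}\le|G|^{\log|G|}$, which uses $|G|\ge1$ so that the exponent can only grow.

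There is no real obstacle here. The only points needing care are that the doubling step relies on Lagrange's theorem for a proper subgroup, and that the logarithm is taken to base $2$. If it were the natural logarithm, part (1) could fail, for instance for elementary abelian $2$-groups.
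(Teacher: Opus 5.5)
Your proof is correct and follows the paper's argument: a chain of subgroups generated by successive elements, with each step at least doubling the order, gives part (1), and injectivity of $\varphi\mapsto(\varphi(g_1),\dots,\varphi(g_d))$ on a generating set of size $\dd(G)$ gives part (2). Your greedy choice of each new element outside the current subgroup makes the strictness of every inclusion explicit; the paper's version obtains this only by tacitly taking a generating set of minimal size.
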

\begin{proof}
If $G=\langle x_1,\dots,x_m\rangle$ and $G_i=\langle x_1,\dots,x_i\rangle$, then
in the chain $G_0<G_1<\dots<G_m=G$ every index is at least $2$, whence
$2^{m}\le|G|$ and $\dd(G)\le\log|G|$.

An automorphism is uniquely determined by the images of the elements of a
generating set, and therefore
$|\Aut(G)|\le|G|^{\dd(G)}\le|G|^{\log|G|}$.
\end{proof}

\begin{lemma}\label{lem:relgen}
Let $H$ be a subgroup of finite index in a group $C$. Then there exist elements
$c_1,\dots,c_s\in C$, where $s\le\log|C:H|$, such that
$C=\langle H,c_1,\dots,c_s\rangle$.
\end{lemma}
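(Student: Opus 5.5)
We will build the elements $c_1,\dots,c_s$ greedily, in exactly the same way as in the proof of Lemma~\ref{lem:aut}(1), except that now $H$ plays the role of the starting subgroup. Put $H_0=H$. Suppose $H_i$ has already been constructed, with $H\le H_i\le C$. If $H_i=C$ we stop. Otherwise we pick an element $c_{i+1}\in C\setminus H_i$ and set
$$
H_{i+1}=\langle H_i,c_{i+1}\rangle=\langle H,c_1,\dots,c_{i+1}\rangle .
$$
Since $c_{i+1}\notin H_i$, the inclusion $H_i<H_{i+1}$ is proper.

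We next check that the process stops and count its steps. Every $H_i$ contains $H$, so $H_i$ has finite index in $C$. We have $|C:H_i|=|C:H_{i+1}|\cdot|H_{i+1}:H_i|$ (the tower law for indices, valid for finite indices in an arbitrary group), and $|H_{i+1}:H_i|\ge2$ because the inclusion is proper. Therefore $|C:H_{i+1}|\le\frac12|C:H_i|$, and by induction $|C:H_i|\le 2^{-i}|C:H|$.

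Because an index is always at least $1$, the chain must end at some step $s$ with $2^{s}\le|C:H|$. Thus $s\le\log|C:H|$, with logarithms to base $2$ as elsewhere in the paper. At that step $H_s=C$, which means $C=\langle H,c_1,\dots,c_s\rangle$.

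There is no real obstacle in this argument. The only point that needs care is that $C$ may be infinite, so one cannot argue with $|C|$. Instead one works with the indices $|C:H_i|$, which are finite because each $H_i$ contains $H$, and uses multiplicativity of indices along the chain $H\le H_i\le H_{i+1}\le C$.
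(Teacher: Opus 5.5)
Your proof is correct and is essentially the paper's argument: you build the chain $H=H_0<H_1<\dots<H_s=C$ by adjoining an element outside the current subgroup, note that each step has index at least $2$, and use multiplicativity of indices to get $2^{s}\le|C:H|$. Your explicit bound $|C:H_i|\le 2^{-i}|C:H|$ only spells out the termination argument a little more carefully than the paper does.
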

\begin{proof}
We construct a chain of subgroups $H=H_0<H_1<\dots<H_s=C$ by putting
$H_{i+1}=\langle H_i,c_{i+1}\rangle$ for an arbitrary element
$c_{i+1}\in C\setminus H_i$. The indices $|C:H_i|$ strictly decrease, so the
process terminates whenever $|C:H|$ is finite. Every inclusion is strict, hence
$|H_{i+1}:H_i|\ge2$. Consequently
$$
  |C:H|=\prod_{i=0}^{s-1}|H_{i+1}:H_i|\ \ge\ 2^{s},
$$
whence $s\le\log|C:H|$. By construction $C=\langle H,c_1,\dots,c_s\rangle$.
\end{proof}

\begin{lemma}\label{prop:gen}
For every finite $n$-$\bfc$-group $G$ one has $|G:\Z(G)|\le n^{\dd(G)}$. In
particular, if $\Z(G)=1$, then $|G|\le n^{\dd(G)}$.
\end{lemma}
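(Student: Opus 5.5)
Let $d=\dd(G)$ and fix generators $x_1,\dots,x_d$ of $G$. The centre is the intersection of the centralisers of the generators:
$$
  \Z(G)=\bigcap_{i=1}^{d} C_G(x_i).
$$
Indeed, an element commuting with every generator commutes with all of $G$, and the reverse inclusion is obvious. So the plan is to bound the index of this intersection.

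Each centraliser has index $|G:C_G(x_i)|=|x_i^{G}|\le n$, because $G$ is an $n$-$\bfc$-group. Applying Lemma~\ref{lem:poincare} repeatedly, by induction on the number of subgroups intersected, gives
$$
  \Bigl|G:\bigcap_{i=1}^{k} C_G(x_i)\Bigr|\le n^{k}.
$$
The inductive step applies the lemma to $A=\bigcap_{i<k}C_G(x_i)$, whose index is at most $n^{k-1}$ by induction, and to $B=C_G(x_k)$, whose index is at most $n$. Taking $k=d$ yields $|G:\Z(G)|\le n^{\dd(G)}$.

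If $\Z(G)=1$, the same inequality reads $|G|\le n^{\dd(G)}$, which is the second assertion.

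There is no real obstacle. The only point to check is the identification of $\Z(G)$ with the intersection of the centralisers of a generating set, and this is immediate. Finiteness of $G$ is used only to guarantee that $\dd(G)$ is finite.
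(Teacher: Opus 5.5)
Your proposal is correct and follows the paper's proof exactly: identify $\Z(G)$ with $\bigcap_{i=1}^{d}C_G(x_i)$ for a minimal generating set, then bound the index by iterating Lemma~\ref{lem:poincare}. Your explicit induction simply spells out the step the paper leaves implicit.
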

\begin{proof}
If $G=\langle x_1,\dots,x_d\rangle$, then $\Z(G)=\bigcap_{i=1}^{d}C_G(x_i)$, and
it remains to apply Lemma~\ref{lem:poincare}.
\end{proof}

Thus, for groups with trivial centre the question of the boundedness of $|G|$ is
equivalent to the question of the boundedness of the number of generators
$\dd(G)$ in terms of $n$. Lemma~\ref{prop:gen} by itself gives no bound: the
estimate $\dd(G)\le\log|G|$ leads to the tautology $|G|\le n^{\log|G|}$. It
does, however, explain why the naive argument ``$\Z(G)$ is an intersection of
centralisers of index at most $n$'' does not work.

The following lemma was proved in~\cite{Grun35}. Since its text is written in
German and is hard to obtain, we include a simple proof.

\begin{lemma}[Gr\"un]\label{lem:grun}
If $G$ is perfect, then $\Z\bigl(G/\Z(G)\bigr)=1$.
\end{lemma}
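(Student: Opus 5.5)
The plan is the classical three subgroup lemma argument. Write $Z=\Z(G)$ and let $Z_2=Z_2(G)$ be the preimage of $\Z(G/Z)$; this is the second centre. We must show $Z_2=Z$. The plan is to prove $[Z_2,G,G]=1$ and $[G,G,Z_2]=1$, and then use perfectness $G=[G,G]$ to obtain $[Z_2,G]=1$, that is, $Z_2\le Z$.

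The first step is immediate from the definition of $Z_2$. Since $Z_2/Z$ is central in $G/Z$, we have $[Z_2,G]\le Z$, and therefore $[Z_2,G,G]\le[Z,G]=1$. The second step uses the symmetry of commutator subgroups of subgroups, $[A,B]=[B,A]$. This gives $[G,Z_2,G]=[[Z_2,G],G]=1$ as well.

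The key step is to apply the three subgroup lemma to the subgroups $X=G$, $Y=G$ and $W=Z_2$. The lemma states that if two of $[X,Y,W]$, $[Y,W,X]$, $[W,X,Y]$ are contained in a normal subgroup $N$, then so is the third. Take $N=1$. We have $[Y,W,X]=[G,Z_2,G]=1$ and $[W,X,Y]=[Z_2,G,G]=1$, hence $[X,Y,W]=[[G,G],Z_2]=1$. Since $G$ is perfect, $[G,G]=G$, so $[G,Z_2]=1$ and $Z_2\le Z$. Therefore $\Z(G/Z)=Z_2/Z=1$.

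For a self-contained proof, the only real work is justifying the three subgroup lemma. I would derive it from the Hall--Witt identity $[x,y^{-1},z]^{y}[y,z^{-1},x]^{z}[z,x^{-1},y]^{x}=1$. For $N=1$ the argument runs as follows. If $[y^{-1},z,x]$-type commutators vanish for all $x\in X$, $y\in Y$, $z\in W$ in two of the three positions, the identity kills the third elementwise. Thus $[x,y,z]=1$ for all $x\in X$, $y\in Y$, $z\in W$, so $W$ centralises every commutator $[x,y]$. Hence $W$ centralises the subgroup $[X,Y]$ they generate, which gives $[[X,Y],W]=1$. This elementwise-to-subgroup passage is the point where care is needed. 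It works here because centralising a generating set is equivalent to centralising the subgroup, so no normality issues arise when $N=1$.
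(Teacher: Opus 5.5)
Your proposal is correct and is the paper's argument: you show $[[Z_2(G),G],G]=1$ and $[[G,Z_2(G)],G]=1$, apply the three subgroup lemma to obtain $[[G,G],Z_2(G)]=1$, and use $G'=G$ to conclude $Z_2(G)\le \Z(G)$. The only addition is your Hall--Witt derivation of the three subgroup lemma for $N=1$, which is sound; the paper simply cites the lemma.
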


\begin{proof}
Let $\Z_2(G)$ be the second centre, that is, the preimage of $\Z(G/\Z(G))$.
Then $[[\Z_2(G),G],G]=1$ and $[[G,\Z_2(G)],G]=1$, so that by the three subgroup
lemma $[[G,G],\Z_2(G)]=1$. Since $G'=G$, it follows that
$\Z_2(G)\le C_G(G)=\Z(G)$.
\end{proof}

\begin{lemma}[{\cite[Theorem 1]{NVL77}}]\label{ENVL}
Let $G$ be an $n$-$\bfc$-group. Then $|G'|\leq n^{\frac12(3+5\log n)}$.
\end{lemma}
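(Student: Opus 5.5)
Since the paper only cites this bound from~\cite{NVL77}, I would reconstruct the standard ``maximal class'' argument, aiming for the exponent $\tfrac12(3+5\log n)$. I will treat the exact constants as the last thing to tune.

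\emph{Reduction to the finite case.} $G'$ is the directed union of the subgroups $K'$, where $K$ runs over finitely generated subgroups of $G$. Each such $K$ is an $n$-$\bfc$-group by Lemma~\ref{lem:index}. So it suffices to bound $|K'|$ uniformly. For finitely generated $K$, Lemma~\ref{prop:gen} (with Lemma~\ref{lem:poincare}, which does not need finiteness) gives $|K:\Z(K)|<\infty$, and then Schur's theorem gives $K'$ finite. I may therefore assume $G'$ finite and $G/\Z(G)$ finite, and pass to a suitable finite quotient if convenient.

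\emph{Building a large abelian-like subgroup of small index.} I would choose elements $x_1,x_2,\dots$ greedily. Put $C_0=G$ and $C_i=C_{G}(x_1,\dots,x_i)$, and choose $x_{i+1}\in C_i$ with $|x_{i+1}^{C_i}|$ maximal. By Lemma~\ref{lem:index} this maximum is at most $n$. I would stop once the maximal class size in $C_i$ no longer at least halves the previous one, or a similar dichotomy. Since class sizes are integers at most $n$, the process runs for about $\log n$ steps. By Lemma~\ref{lem:poincare}, $|G:C_r|\le n^{r}$ with $r\lesssim \log n$; refining this by tracking the actual class sizes should give roughly $n^{\frac12\log n}$.

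\emph{Key lemma (B.~H.~Neumann's trick).} Suppose $x$ has maximal class size in a group $C$ and $h\in C_C(x)$. Then $(xh)^{C}$ has size at most $|x^C|$. Since $C_C(x)\cap C_C(h)\le C_C(xh)$, one gets that $C_C(x)\cap C_C(h)$ has small index in $C_C(h)$. Iterating this along the chain $C_0\ge C_1\ge\dots\ge C_r$, I want to show two things: every element of $H=C_r$ has $G$-class inside a set of size about $n$ controlled by the $x_i$, and $[H,G]$ is contained in a subgroup of order at most $n^{O(1)}$, ideally $n^{3/2}$ or $n^{2}$.

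\emph{Assembly.} $G'$ is generated by $[H,G]$ together with commutators $[t_i,t_j]$ of representatives of a transversal of $H$ in $G$. Modulo the normal subgroup $[H,G]$, the image of $H$ is central, so $G/[H,G]$ has centre of index at most $|G:H|$. Schur's theorem in Wiegold's quantitative form then bounds the derived subgroup of the quotient by a function of $|G:H|$. Multiplying the two bounds should yield $|G'|\le n^{\frac12(3+5\log n)}$ after optimisation.

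\emph{Main obstacle.} The hard part is the key-lemma step: showing that $[H,G]$ is small with a polynomial bound in $n$. A crude version of the Schur–Wiegold step would also lose a $\log$ factor in the exponent. I would try first to prove directly that $[H,G]\subseteq\bigcup_i x_i^{-1}x_i^{G}\cdots$, giving a set of size polynomial in $n$. I would then use that a normal subgroup generated by at most $n$ conjugates, each with class size at most $n$, has bounded order via Dietzmann's lemma.
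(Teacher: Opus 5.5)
The paper gives no proof of this lemma; it simply quotes~\cite[Theorem 1]{NVL77}. Your reduction to finitely generated subgroups $K$ is fine: use $|K:\Z(K)|\le n^{\dd(K)}$, then Schur's theorem, and note that $G'=\bigcup_K K'$ is a directed union. Everything after that, however, is a plan whose central estimate is missing.

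\begin{itemize}
\item Your ``key lemma'' gives nothing. The bound $|C_C(h):C_C(x)\cap C_C(h)|=|x^{C_C(h)}|\le n$ is just Lemma~\ref{lem:index}. Neither the maximality of $|x^C|$ nor the element $xh$ improves it. So it cannot yield the asserted bound $|[H,G]|\le n^{O(1)}$, which is never derived.
\item You give no reason why the maximal class sizes along $C_0\ge C_1\ge\cdots$ should halve. You also give no structural conclusion for the case when they do not. So the estimate $|G:H|\approx n^{\frac12\log n}$ is unsupported.
\item The Dietzmann fallback does not rescue this. Dietzmann's lemma needs a bound on element orders, which you do not have in terms of $n$. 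It also gives an order bound exponential in the size of the normal generating set, here about $n^2$, which is far from polynomial in $n$.
\end{itemize}

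More fundamentally, the assembly step cannot produce the stated exponent even if those claims were granted. With $m=|G:H|=n^{\Theta(\log n)}$, Wiegold's quantitative Schur theorem gives only
$|(G/[H,G])'|\le m^{\frac12(\log m-1)}=n^{\Theta((\log n)^3)}$.
No bound in terms of $m$ alone can beat $m^{\Theta(\log m)}$: free nilpotent groups of class $2$ and exponent $p$ attain this order. So no tuning of constants reaches $n^{\frac12(3+5\log n)}$. You lose a factor of $(\log n)^2$ in the exponent, not a single $\log$ factor as you suggest.

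To get an exponent linear in $\log n$, you must use the bound $n$ on class sizes of the successive quotients at every stage, i.e.\ argue recursively on the BFC-number. For example, produce a normal subgroup $N\le G'$ with $|N|\le n^{c}$ and $\bfc(G/N)\le n/2$. Then $|G'|\le|N|\,|(G/N)'|$ telescopes to $n^{\frac c2(1+\log n)}$. A polynomial-in-$n$ estimate of this kind, feeding such a recursion, is the genuinely missing idea, and your proposal does not supply one.
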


\begin{lemma}[{\cite[Theorem 1.8]{GM11}}]\label{NM}
Let $G$ be an $n$-$\bfc$-group with $n>1$. Then $|G'|< n^{\frac12(7+\log n)}$.
\end{lemma}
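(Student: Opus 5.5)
Since Lemma~\ref{NM} is quoted from~\cite{GM11} and rests on the classification, I do not expect to reprove it from scratch. I would follow the Guralnick--Mar\'oti strategy: first reduce to finite groups, then bound $|G'|$ by induction along a chief series, controlling each chief factor with a fixed-point estimate.

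\emph{Reduction to the finite case.} By Neumann's theorem $G'$ is finite, so it is generated by finitely many commutators. Choose a finitely generated subgroup $H\le G$ containing the entries of these commutators, so that $H'=G'$; by Lemma~\ref{lem:index}, $H$ is again an $n$-$\bfc$-group. Since $H$ is finitely generated, Lemma~\ref{lem:poincare} gives $|H:\Z(H)|\le n^{\dd(H)}<\infty$. Then $\Z(H)$ is a finitely generated abelian group, and it contains a characteristic subgroup $N$ of finite index with $N\cap H'=1$. The quotient $H/N$ is a finite $n$-$\bfc$-group whose derived subgroup is isomorphic to $G'$. So we may assume that $G$ is finite.

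\emph{Induction for finite $G$.} Take a minimal normal subgroup $V$ of $G$ contained in $G'$. By induction applied to $G/V$, whose class sizes are at most those of $G$, we get $|G'/V|$ bounded by the same formula. The task is then to absorb the factor $|V|$ into the exponent. I would use the identity
$$
|[V,g]|=\frac{|V|}{|C_V(g)|}\le|g^G|\le n .
$$
This identity needs care only when $V$ is non-abelian; in the abelian case it is the action of $g$ on the module $V$. In the abelian case the key input is the Guralnick--Mar\'oti estimate on the average dimension of fixed-point spaces. For a group $X$ acting nontrivially and irreducibly on $V$, there are elements, or a product of a few of them, with $|C_V(x)|\le|V|^{1/2}$. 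Combined with $|[V,g]|\le n$, this yields roughly $|V|\le n^2$ for the relevant part of $V$. If $V$ is centralised by $G$, I would instead use a Vaughan-Lee-type count for central sections. This count is what produces the $\tfrac12\log n$ term in the exponent, in the spirit of the nilpotent case~\cite{VL74}. Non-abelian $V$ is handled by the $F(G)=1$ results, which give $|V|<n^2$. Multiplying the contributions and keeping track of the induction on $n$ gives the exponent $\tfrac12(7+\log n)$.

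\emph{Main obstacle.} The hard step is the fixed-point estimate for irreducible linear groups, namely that the average of $\dim C_V(x)$ is at most $\tfrac12\dim V$. Its proof requires CFSG, which is exactly why the paper prefers Lemma~\ref{ENVL} when it wants to avoid CFSG. Balancing the central factors against the non-central ones so that the logarithmic term enters only with coefficient $\tfrac12$ is also delicate, and I would follow~\cite{GM11} closely there.
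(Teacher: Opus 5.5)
The paper does not prove Lemma~\ref{NM}. It quotes it from \cite[Theorem~1.8]{GM11} and never uses it; the main proof runs on Lemma~\ref{ENVL}. Your reduction to finite groups is correct. You have $H'=G'$ and $|H:\Z(H)|\le n^{\dd(H)}$, and a torsion-free subgroup $N$ of finite index in the finitely generated abelian group $\Z(H)$ gives a finite $n$-$\bfc$-group $H/N$ with $(H/N)'\cong G'$.

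The finite case has a genuine gap, and the induction you describe cannot close. Applying the inductive hypothesis to $G/V$ with the same $n$ gives only $|G'|\le|V|\,n^{\frac12(7+\log n)}$. The factor $|V|$ can be absorbed only if $\bfc(G/V)$ is smaller than $n$ by an amount controlled by $|V|$, and nothing in your sketch forces any drop. In fact the drop need not happen:
\begin{itemize}
\item Let $G$ be the free nilpotent group of class $2$ and exponent $p$ on $a_1,a_2,a_3$, with $p$ odd.
\item Then $G'=\Z(G)\cong C_p^{3}$, and every non-central class has size $p^2$.
\item Take the minimal normal subgroup $V=\langle[a_1,a_2]\rangle\le G'$. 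Since $[a_3,G]=\langle[a_3,a_1],[a_3,a_2]\rangle$ meets $V$ trivially, the class of $a_3V$ in $G/V$ still has $p^2$ elements.
\item Hence $\bfc(G/V)=\bfc(G)$.
\end{itemize}
Your fixed-point input only shrinks the classes of elements acting nontrivially on $V$; it says nothing about elements of $C_G(V)$.

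The estimate you extract, $|V|\le n^2$ per chief factor, is also the wrong kind of bound. Multiplied along a chief series, it gives an exponent proportional to the number of chief factors, which is not bounded a priori. Even if the $\bfc$-number halved at every step, it would only give an exponent of about $\log n$, not $\frac12\log n$. An exponent of the form $\frac12(c+\log n)$ is what a telescoping product $n\cdot\frac n2\cdot\frac n4\cdots$ produces. That requires a step which factors out a normal subgroup of order at most a constant times the current $\bfc$-number while at least halving the $\bfc$-number. Your proposal contains no step of this kind, and that is exactly the part you leave to ``follow \cite{GM11} closely''.
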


\section{Proof of Theorem~\ref{thm:main}}\label{sec:main}

Let $G$ be an $n$-$\bfc$-group with trivial centre. Note that if $G'=1$, then
$G$ is abelian and $G=\Z(G)=1$; hence $G'\ne 1$.

Put $C=C_G(G')$. Note that $G/C$ embeds into the automorphism
group of $G'$. By Lemma~\ref{ENVL} the group $G'$ is finite, and therefore so is
$G/C$. Fix elements $g_1,\dots,g_r\in G$ whose images generate $G/C$. By
Lemma~\ref{lem:aut} we have $r=\dd(G/C)\le\log |G:C|$. Clearly
$G=\langle C,\ g_1,\dots,g_r\rangle$.

Put
$$
  a=\tfrac12\bigl(3+5\log n\bigr),
$$
so that $|G'|\le n^{a}$ by Lemma~\ref{ENVL}.

We split the rest of the proof into several steps.

\medskip
\noindent\textbf{Step 1.} \emph{$C$ is nilpotent of class at most $2$, and
$|G:C|\le n^{a^2\log n}$.}

Since $C\le G$, we have $C'\le G'$; since $C$ centralises $G'$, it centralises
$C'$ as well, that is, $C'\le \Z(C)$. Consequently $C$ is nilpotent of class at
most $2$.

Clearly $C$ is the kernel of the action of $G$ on $G'$, so that $G/C$ embeds
into $\Aut(G')$. By Lemma~\ref{lem:aut}.2 and the bound $|G'|\le n^{a}$,
$$
  |\Aut(G')|\le|G'|^{\log|G'|}\le\bigl(n^{a}\bigr)^{a\log n}=n^{a^{2}\log n}.
$$
Thus $|G:C|\le n^{a^{2}\log n}$.

Note that $r\le \log n^{a^2\log n}=a^2 (\log n)^2$.

\medskip
\noindent\textbf{Step 2.} \emph{Put $K=\bigcap_{i=1}^{r}C_C(g_i)$. Then the
following hold:}
\begin{enumerate}
	\item $|C:K|\le n^{a^2 (\log n)^2}$;
	\item $K$ \emph{is abelian.}
\end{enumerate}

By Lemma~\ref{lem:index} we have $|C:C_C(g_i)|\le n$ for every $i$, and by
Lemma~\ref{lem:poincare} the index of the intersection does not exceed $n^{r}$.
Since $r\le a^2 (\log n)^2$, the required bound follows.

Let us prove that $K$ is abelian. Let $c,d\in K$. Then $[c,d]\in C'$. On the
other hand, for every $i$ we have
$$
  [c,d]^{g_i}=[c^{g_i},d^{g_i}]=[c,d],
$$
since $c$ and $d$ commute with $g_i$ and by Step 1 $C'\leq Z(C)$. We have
$C_G([c,d])\geq \langle C,g_1,...,g_r\rangle=G$, and yields $[c,d]=1$.

\medskip
\noindent\textbf{Step 3.} \emph{$|\Z(C)|\le n^{r}$.}

Assume that there exists $b\in \Z(C)\setminus\{1\}$ commute with all of $g_1,\dots,g_r$. We have $C_G(b)\geq\langle C,g_1,\dots,g_r\rangle=G$.
Thus $b\in \Z(G)=1$; a contradiction. Therefore
$$
  \bigcap_{i=1}^{r}C_{\Z(C)}(g_i)=1 .
$$
By Lemma~\ref{lem:index} we have $|\Z(C):C_{\Z(C)}(g_i)|\le n$, and
Lemma~\ref{lem:poincare} gives
$$
  |\Z(C)|=\Bigl|\Z(C):\bigcap_{i=1}^{r}C_{\Z(C)}(g_i)\Bigr|\le n^{r}.
$$

\medskip
\noindent\textbf{Step 4.} \emph{Let $c_1,\dots,c_s\in C$ be elements with
$C=\langle K,c_1,\dots,c_s\rangle$, chosen so that $s\le\log|C:K|$; such a set
exists by Lemma~\ref{lem:relgen}, since the index $|C:K|$ is finite by Step 2.1.
Put $D=\bigcap_{j=1}^{s}C_K(c_j)$. Then $D\le \Z(C)$ and $|K:D|\le n^{s}$.}

Let $d\in D$. It centralises every $c_j$ by definition, and it centralises $K$,
since $K$ is abelian by Step 2.2 and $d\in K$. Hence $d$ centralises
$\langle K,c_1,\dots,c_s\rangle=C$, that is, $d\in\Z(C)$. The bound for $|K:D|$
follows from Lemmas~\ref{lem:index} and~\ref{lem:poincare}.

Note that, by Step 2.1, $s\le\log|C:K|\le a^2(\log n)^3$.

\begin{proof}[Proof of Theorem~\ref{thm:main}]
We have the chain of subgroups $G\ge C\ge K\ge D$, where $D\le\Z(C)$ by Step~5.
By Steps 1, 2, 4 and 3 respectively,
$$
  |G|=|G:C|\cdot|C:K|\cdot|K:D|\cdot|D|
  \ \le\ n^{a^{2}\log n}\cdot n^{r}\cdot n^{s}\cdot n^{r}
  \ =\ n^{\,a^{2}\log n+2r+s},
$$
where $|D|\le|\Z(C)|\le n^{r}$.

Put $\lambda=\log n$; since $n\ge2$, we have $\lambda\ge1$. By Step~1,
$r\le a^{2}\lambda^{2}$, and by Step~5, $s\le a^{2}\lambda^{3}$, so that the
exponent does not exceed
$$
  a^{2}\lambda+2a^{2}\lambda^{2}+a^{2}\lambda^{3}
  = a^{2}\lambda\,(1+\lambda)^{2}.
$$
Finally, $a=\tfrac12(3+5\lambda)\le4\lambda$ and
$(1+\lambda)^{2}\le4\lambda^{2}$ for $\lambda\ge1$, whence
$$
  a^{2}\lambda\,(1+\lambda)^{2}\ \le\ 16\lambda^{2}\cdot\lambda\cdot4\lambda^{2}
  \ =\ 64\lambda^{5},
$$
that is, $|G|\le n^{64(\log n)^{5}}$.
\end{proof}

\begin{proof}[Proof of Corollary~\ref{cor:hyper}]
Let $Z_\infty(G)=Z_c(G)$, that is, $Z_{c+1}(G)=Z_c(G)$, and put
$\bar G=G/Z_c(G)$. Then $\Z(\bar G)=Z_{c+1}(G)/Z_c(G)=1$.

Further, the conjugacy class of an element $\bar x\in\bar G$ is the image of the
class $x^{G}$ under the natural homomorphism, so that
$|\bar x^{\bar G}|\le|x^{G}|\le n$. Hence $\bar G$ is an $\bar n$-$\bfc$-group
with $\bar n\le n$.

If $\bar G=1$, there is nothing to prove. Otherwise $\bar n\ge2$: indeed, if
$\bar n=1$, then $\bar G$ would be abelian, and then $\bar G=\Z(\bar G)=1$.
Applying Theorem~\ref{thm:main} to $\bar G$, and using the fact that the
function $t\mapsto t^{64(\log t)^{5}}$ is non-decreasing for $t\ge2$, we obtain
$$
  |G:Z_\infty(G)|=|\bar G|<\bar n^{64(\log \bar n)^{5}}\le n^{64(\log n)^{5}} .
$$
\end{proof}

\section{Sharpness of the bound and its relation to
Conjectures~\ref{NVL} and~\ref{conj:sharp}}\label{sec:sharp}

\begin{proposition}\label{prop:agl}
Let $q>2$ be a prime power and let
$G=\AGL(1,q)=\mathbb{F}_q\rtimes\mathbb{F}_q^{\times}$. Then $\Z(G)=1$, the
$\bfc$-number of $G$ equals $n=q$, and $|G|=q(q-1)=n^{2}-n$.
\end{proposition}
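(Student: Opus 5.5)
We verify the three claims of Proposition~\ref{prop:agl} by computing directly in $G=\AGL(1,q)$, viewed as the group of affine maps $x\mapsto \alpha x+\beta$ of $\mathbb{F}_q$ with $\alpha\in\mathbb{F}_q^{\times}$, $\beta\in\mathbb{F}_q$. Write $(\alpha,\beta)$ for such a map. The order $|G|=q(q-1)$ is immediate, so the work is to compute all conjugacy classes. Conjugating $(\alpha,\beta)$ by $(\gamma,\delta)$ gives an element with the same linear part $\alpha$ and translation part $\gamma\beta+\delta(1-\alpha)$, up to the convention for composition. The class sizes can be read off from this formula.

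\textbf{Class sizes.} For $\alpha=1$ and $\beta\ne0$ the translation parts $\gamma\beta$ run over all of $\mathbb{F}_q^{\times}$. So the nontrivial translations form one class of size $q-1$. For $\alpha\ne1$, as $\delta$ ranges over $\mathbb{F}_q$ the quantity $\delta(1-\alpha)$ takes every value. So the class of $(\alpha,\beta)$ is the whole coset $\{(\alpha,\beta'):\beta'\in\mathbb{F}_q\}$, of size exactly $q$. Equivalently, $C_G((\alpha,\beta))$ is the stabiliser of the unique fixed point $\beta/(1-\alpha)$, a conjugate of $\mathbb{F}_q^{\times}$ of order $q-1$, so the index is $q$. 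Together with the identity, the class sizes are $1$, $q-1$, and $q$. The last size occurs because $q>2$ guarantees some $\alpha\ne1$. Hence the $\bfc$-number is $n=q$.

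\textbf{Trivial centre.} The only class of size $1$ is that of the identity: the nontrivial translations have class size $q-1\ge2$, since $q>2$, and all other elements have class size $q$. Thus $\Z(G)=1$. Finally, $|G|=q(q-1)=n^2-n$.

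There is no real obstacle here. The only point needing care is the hypothesis $q>2$. It is used twice: to guarantee that some $\alpha\ne1$ exists, so that $n=q$ rather than $1$, and to ensure $q-1\ge2$, so that translations are not central. For $q=2$ the group is $C_2$ and the statement fails. The conjugation formula itself is a routine check once a composition convention is fixed.
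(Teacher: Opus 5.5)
Your proof is correct and follows essentially the same route as the paper: both compute the class sizes directly, getting $q-1$ for nontrivial translations (centraliser $V$) and $q$ for every element with nontrivial linear part (centraliser a point stabiliser, i.e.\ a conjugate of $\mathbb{F}_q^{\times}$). Your explicit conjugation formula justifies the paper's brief ``same up to conjugacy'' step for the elements $vh$. You also read off $\Z(G)=1$ from the class sizes rather than from $C_G(V)=V$ and $C_V(H)=0$, which is only a cosmetic difference.
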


\begin{proof}
Let $V=\mathbb{F}_q$ be the additive group and let $H=\mathbb{F}_q^{\times}$ act
on it by multiplication. The action is faithful and has no nonzero fixed
points, so that $C_G(V)=V$ and $\Z(G)\le V$, while $C_V(H)=0$ gives $\Z(G)=1$.
For $0\ne v\in V$ we have $C_G(v)=V$ and $|v^{G}|=q-1$. For $1\ne h\in H$ we
have $C_V(h)=0$, so that $C_G(h)$ is a conjugate of $H$, of order $q-1$, and
$|h^{G}|=q$. For $vh$ with $v\ne0$, $h\ne1$ the situation is the same up to
conjugacy. Hence $n=q$.
\end{proof}

Thus the ratio $\log|G|/\log n$ on the series $\AGL(1,q)$ tends to~$2$, and the
exponent~$2$ in Conjecture~\ref{conj:sharp} cannot be improved. Note that the
extremal groups here are soluble rather than perfect.

\medskip
\noindent\textbf{Computational check.} Using GAP~\cite{GAP} we ran through all
groups of order at most $1000$ with trivial centre. No case with $|G|>n^{2}-n$
was found. The maximum of the quantity $\log|G|/\log n$ is attained exactly on
the groups of Proposition~\ref{prop:agl}:
\[
  \begin{array}{lccc}
    \text{group} & |G| & n & \log|G|/\log n\\[2pt]
    \AGL(1,19) & 342 & 19 & 1.9816\\
    \AGL(1,32) & 992 & 32 & 1.9908
  \end{array}
\]

The following observation shows that the truth of
Conjecture~\ref{conj:sharp} implies the truth of Conjecture~\ref{NVL}.

\begin{corollary}\label{cor:conjC}
Conjecture~\ref{conj:sharp} implies Conjecture~\ref{NVL}.
\end{corollary}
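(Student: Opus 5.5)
The plan is to apply Conjecture~\ref{conj:sharp} to a suitable centreless quotient and then compare the resulting bound $\bar n^2-\bar n$ with $n^2$. The only tools needed are Lemma~\ref{lem:grun} (Grün) and the observation, already used in the proof of Corollary~\ref{cor:hyper}, that passing to a quotient does not increase the $\bfc$-number. I will treat the general form, where $G$ is a finite perfect group whose centre may be nontrivial and the claim is $|G:\Z(G)|\le n^2$; the wording of Conjecture~\ref{NVL}, with $G$ centreless, is then the special case $\Z(G)=1$.

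\textbf{Step 1: the centreless quotient.} Let $G$ be a finite perfect group and let $n$ be the maximum size of a conjugacy class in $G$. Put $\bar G=G/\Z(G)$. By Lemma~\ref{lem:grun}, $\Z(\bar G)=1$. Each conjugacy class of $\bar G$ is the image of a conjugacy class of $G$, so $\bar G$ is an $\bar n$-$\bfc$-group with $\bar n\le n$; this is exactly the argument in the proof of Corollary~\ref{cor:hyper}. If $\Z(G)=1$, which is the hypothesis as worded, then $\bar G=G$ and $\bar n=n$, so this step is vacuous.

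\textbf{Step 2: apply the sharp conjecture.} If $\bar G=1$, then $|G:\Z(G)|=1\le n^2$ and there is nothing to prove. Otherwise $\bar G$ is a nontrivial finite $\bar n$-$\bfc$-group with trivial centre, so $\bar n\ge 2$, since an abelian centreless group is trivial. Conjecture~\ref{conj:sharp} then gives
$$
  |G:\Z(G)|=|\bar G|\le \bar n^{2}-\bar n .
$$

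\textbf{Step 3: compare with $n^2$.} The function $t\mapsto t^2-t$ is non-decreasing for $t\ge1$. Since $\bar n\le n$, we get
$$
  \bar n^{2}-\bar n\le n^{2}-n<n^{2}.
$$
In the centreless case this is exactly the inequality $|G|\le n^2$ of Conjecture~\ref{NVL}, in fact with the slightly stronger bound $n^2-n$.

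\textbf{Main point to check.} There is no real obstacle in this argument. The only place where perfectness is used is the appeal to Lemma~\ref{lem:grun}, which guarantees that the quotient $\bar G$ is centreless, so that Conjecture~\ref{conj:sharp} applies to it. The one thing to verify carefully is that the $\bfc$-number of $\bar G$ does not exceed that of $G$, and this holds because conjugacy classes map onto conjugacy classes under the natural homomorphism.
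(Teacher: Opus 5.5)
Your proposal is correct and matches the paper's proof: pass to $G/\Z(G)$, use Gr\"un's lemma to make the quotient centreless with $\bfc$-number at most $n$, apply Conjecture~\ref{conj:sharp}, and conclude $|G:\Z(G)|\le \bar n^2-\bar n\le n^2-n<n^2$. Setting aside the case $\bar G=1$ is a small gain in care. Read literally, Conjecture~\ref{conj:sharp} fails for the trivial group ($1>1^2-1$), and the paper's proof does not exclude that degenerate case.
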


\begin{proof}
Let $G$ be perfect and an $n$-$\bfc$-group, and put $\bar G=G/\Z(G)$. The class
$\bar x^{\bar G}$ is the image of the class $x^{G}$, so $\bar G$ is an
$\bar n$-$\bfc$-group with $\bar n\le n$; by Lemma~\ref{lem:grun} we have
$\Z(\bar G)=1$. Applying Conjecture~\ref{conj:sharp} to $\bar G$, we obtain
$|G:\Z(G)|=|\bar G|\le \bar n^{2}-\bar n\le n^{2}-n<n^{2}$.
\end{proof}

\begin{remark}
Note that Conjecture~\ref{NVL} says nothing about non-perfect groups with
trivial centre. Moreover, the extremal series of Proposition~\ref{prop:agl} is
soluble. Thus a proof of Conjecture~\ref{NVL} would not directly yield
Conjecture~\ref{conj:sharp}.
\end{remark}

\section{Open questions}\label{sec:open}

Naturally, settling Conjecture~\ref{conj:sharp} is the main object of study. A
first step towards a proof of it is the following question.

\begin{question}
Is there a polynomial bound for the order of an $n$-$\bfc$-group with trivial
centre?
\end{question}

Another weakening of Conjecture~\ref{conj:sharp} is to prove it for soluble
groups.

\begin{question}
Is Conjecture~\ref{conj:sharp} true for soluble groups?
\end{question}

Here, instead of $\Aut(G')$, one may work with the Fitting subgroup and apply
the technique of Chapter~III of~\cite{NVL77}, where the bound
$|G'|\le n^{\frac12(5+\log n)}$ is obtained in the soluble case.

It would likewise be an important advance to find a better bound for the number
of generators.

\begin{question}
Is there a linear function $f$ such that $\dd(G)\leq f\bigl(\log \bfc(G)\bigr)$
for every finite group $G$ with trivial centre?
\end{question}

	\bigskip

Ilya~B. Gorshkov

Sobolev Institute of Mathematics,

Novosibirsk, Russia,

E-mail address: ilygor8@gmail.com
\end{document}